\theoremstyle{plain}
\newtheorem{theorem}{Theorem}[section]
\theoremstyle{definition}
\theoremstyle{remark}
\newtheorem*{remark}{Remark}
\newtheoremstyle{tiny}
     {\topsep}
     {\topsep}
     {\footnotesize}
     {}
     {\itshape}
     {.}
     {.5em}
     {}
\theoremstyle{tiny}
\newtheorem*{tremark}{Remark}
\newcommand{\comma}{{},\,}
\newcommand{\col}{\!:\:\!}
\newcommand{\loc}{\!\::\!}
\newcommand{\bbI}{I\!\!I}
\newcommand{\bbZ}{\mathbb Z}
\newcommand{\mcC}{\mathcal C}
\newcommand{\mcD}{\mathcal D}
\newcommand{\mcI}{\mathcal I}
\newcommand{\mcM}{\mathcal M}
\newcommand{\mcO}{\mathcal O}
\newcommand{\mcS}{\mathcal S}
\newcommand{\bbone}{\bbI^0}
\newcommand{\sSet}{\mathsf{sSet}}
\newcommand{\ra}{\rightarrow}
\newcommand{\la}{\leftarrow}
\newcommand{\xlra}[1]{\xrightarrow{\ #1\ }}
\newcommand{\xlla}[1]{\xleftarrow{\ #1\ }}
\newcommand{\lra}{\longrightarrow}
\newcommand{\cof}[1][]{\mathbin{\:\!\!\xymatrix@1@C=15pt{{}\ar@{ >->}[r]^{#1} & {}}}}
\newcommand{\fib}[1][]{\mathbin{\:\!\!\xymatrix@1@C=15pt{{}\ar@{->>}[r]^{#1} & {}}}}
\newcommand{\pbsize}{15pt}
\newcommand{\pboffset}{.5}
\newcommand{\xycorner}[3]{\save #2="a";#1;"a"**{}?(\pboffset);"a"**\dir{-};#3;"a"**{}?(\pboffset);"a"**\dir{-}\restore}
\newcommand{\pb}{\xycorner{[]+<\pbsize,0pt>}{[]+<\pbsize,-\pbsize>}{[]+<0pt,-\pbsize>}}
\newcommand{\po}{\xycorner{[]+<-\pbsize,0pt>}{[]+<-\pbsize,\pbsize>}{[]+<0pt,\pbsize>}}
\newcommand{\xyhcorner}[3]{\save #2="a";#1;"a"**{}?(\pboffset);"a"**\dir{-};#3;"a"**{}?(\pboffset);"a"**\dir{-};[]**{}?(.3)*{\scriptstyle h} \restore}
\newcommand{\hpo}{\xyhcorner{[]+<-\pbsize,0pt>}{[]+<-\pbsize,\pbsize>}{[]+<0pt,\pbsize>}}
\newcommand{\xymatrixc}[2][]{\xy *!C\xybox{\xymatrix#1{#2}}\endxy}
\newcommand{\leftbox}[2]{\phantom{#1} \save []+L*+<.5pc>!!<0pt,\the\fontdimen22\textfont2>!L{#1#2} \restore}
\newcommand{\rightbox}[2]{\phantom{#2} \save []+R*+<.5pc>!!<0pt,\the\fontdimen22\textfont2>!R{#1#2} \restore}
\newcommand{\cleftbox}[2]{#1#2 \POS[]+L*+<.5pc>!!<0pt,\the\fontdimen22\textfont2>!L{\phantom{#1}\vphantom{#2}}+C*+<.5pc>{\phantom{#1}\vphantom{#2}}}
\newcommand{\crightbox}[2]{#1#2 \POS[]+R*+<.5pc>!!<0pt,\the\fontdimen22\textfont2>!R{\vphantom{#1}\phantom{#2}}+C*+<.5pc>{\vphantom{#1}\phantom{#2}}}
\newcommand{\tdcong}{\rotatebox{-90}{$\cong$}}
\newcommand{\stdsimp}[1]{\Delta^{#1}}
\newcommand{\horn}[2]{%
\xy
<0pt,-\the\fontdimen22\textfont2>;p+<.1em,0em>:
{\ar@{-}(0,0);(3,7)},
{\ar@{-}(3,7);(5,0)},
{\ar@{-}(3.2,7);(5.2,0)},
{\ar@{-}(3.4,7);(5.4,0)}
\endxy\;\!{}^{#1}_{#2}}
\newcommand{\conn}{\operatorname{conn}}
\newcommand{\defeq}{\stackrel{\mathrm{def}}{=}}
\newcommand{\hocolim}{\operatorname{hocolim}}
\newcommand{\id}{\operatorname{id}}
\newcommand{\incl}{\operatorname{in}}
\newcommand{\map}{\mathop\mathsf{map}\nolimits}
\newcommand{\op}{\mathrm{op}}
\newcommand{\xysmallbullet}{*+<.2pc>!!<0pt,\the\fontdimen22\scriptfont2>{\scriptstyle\bullet}}
\newcommand{\thed}{d}
\newcommand{\thek}{k}
\newcommand{\then}{n}
\titleformat{\section}[block]
{\normalfont\Large\filcenter\bfseries}{\thesection.}{.33em}{}
\titleformat{\subsection}[runin]
{\normalfont\normalsize\bfseries}{\thesubsection.}{.33em}{}[.]
\title{Heaps and unpointed stable homotopy theory\thanks{%
The research was supported by the grant P201/11/0528 of the Czech Science Foundation (GA \v CR).\newline
2010 \emph{Mathematics Subject Classification}. Primary 55U35; Secondary 55P42.\newline
\emph{Key words and phrases}. stable homotopy, heap, equivariant, fibrewise.
}}
\author
{Luk\'a\v{s} Vok\v{r}\'{\i}nek}
\begin{document}

\maketitle

\begin{abstract}
In this paper, we show how certain ``stability phenomena'' in \emph{unpointed} model categories provide the sets of homotopy classes with the structure of abelian heaps, i.e.\ abelian groups without a choice of a zero. In contrast with the classical situation of stable (pointed) model categories, these sets can be empty.
\end{abstract}

\section{Introduction}

This paper grew out of an attempt to understand the appearance of non-canonical abelian group structures on sets of equivariant fibrewise homotopy classes of maps under certain stability restrictions (dimension vs.\ connectivity), utilized in \cite{aslep} for the algorithmic computation of this set. Classically, the abelian group structures on stable homotopy classes of maps rely on the presence of basepoints -- in particular, the constant map onto the basepoint serves as the zero element of this group.

However, there are situations in which basepoints do not exist, e.g.\ for spaces equipped with a free action of a fixed group $G$ or spaces over a fixed base space $B$ that do not admit any section. In such situations, there is no canonical choice of a zero element and as a result, the structure on the set of homotopy classes turns out to be more naturally an abelian heap; moreover, it could be empty. This structure is constructed in \cite{heaps2} from the Moore--Postnikov tower of the target and as such is bound to the specific situation of that paper. The present paper explains the situation more conceptually.

Now we will state the main result of this paper. It uses the notions of $d$-connected and $n$-dimensional objects that will only be explained later but which in many cases (most notably those mentioned above) have a straightforward interpretation. Heaps are defined formally after the statement -- they are essentially groups without a choice of a zero.

\begin{theorem}\label{t:heap_structure_abstract}
Let $\mcM$ be a simplicial model category. Then the set $[X,Y]$ of homotopy classes of maps from an $n$-dimensional cofibrant object $X$ to a $d$-connected fibrant object $Y$ with $n\leq 2d$ admits a canonical structure of a (possibly empty) abelian heap.
\end{theorem}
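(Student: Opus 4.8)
The plan is to identify $[X,Y]$ with $\pi_0\map(X,Y)$, which is legitimate since $X$ is cofibrant and $Y$ is fibrant, so that $\map(X,Y)$ is a Kan complex, and then to induce the abelian heap operation from a single canonical ``partial Mal'cev operation'' on $Y$ itself. Inside $Y\times Y\times Y$ I would consider the fat diagonal $T=\{a=b\}\cup\{b=c\}$ (formed as a homotopy pushout of the two partial diagonals along the total diagonal $Y\to Y^{\times3}$, which is harmless as $Y$ is fibrant). On $T$ there is a tautological map $\tau_0\colon T\to Y$, equal to the projection $\mathrm{pr}_3$ on the locus $\{a=b\}$ and to $\mathrm{pr}_1$ on the locus $\{b=c\}$; these agree on the overlap $\{a=b=c\}$, so $\tau_0$ is well defined and requires no basepoint. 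This $\tau_0$ is the germ of the operation ``$a-b+c$''.

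The construction then proceeds through the span
$$\map(X,Y)^{\times3}=\map(X,Y^{\times3})\xlla{\iota_*}\map(X,T)\xlra{(\tau_0)_*}\map(X,Y),$$
where $\iota\colon T\hookrightarrow Y^{\times3}$ is the inclusion, by declaring $\tau(f,g,h)\in[X,Y]$ to be the image of $(f,g,h)$ under $(\tau_0)_*\circ(\iota_*)^{-1}$ on $\pi_0$. For this to make sense the key input is that $\iota$ is $(2d+1)$-connected; granting this, the standard fact that a $c$-connected map induces a $(c-n)$-connected map on $\map(X,-)$ for an $n$-dimensional $X$ shows that $\iota_*$ is $((2d+1)-n)$-connected, hence a bijection on $\pi_0$ precisely because $n\leq2d$. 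Thus $\tau$ is well defined, and the bound $n\leq2d$ enters exactly as the condition $(2d+1)-n\geq1$.

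I expect the estimate that $\iota\colon T\hookrightarrow Y^{\times3}$ is $(2d+1)$-connected to be the main obstacle, since it is the only genuinely homotopy-theoretic step. In spaces it would follow from a K\"unneth/Mayer--Vietoris computation showing that $\tilde H_{*}(Y^{\times3}/T)$ is concentrated in degrees $\geq 2d+2$, the lowest surviving classes being products of two reduced classes of $Y$, each in degree $\geq d+1$; abstractly in $\mcM$ I would derive it from the hypothesis that $Y$ is $d$-connected via a homotopy cocartesian/cartesian cube argument of Blakers--Massey type applied to the diagram of partial diagonals.

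Finally I would verify the abelian heap axioms, which fall out almost formally. The Mal'cev identities $\tau(f,g,g)=f$ and $\tau(g,g,f)=f$ hold on the nose, because the triples $(f,g,g)$ and $(g,g,f)$ already factor through $T$, where $\tau_0$ is $\mathrm{pr}_1$ and $\mathrm{pr}_3$ respectively. Commutativity $\tau(f,g,h)=\tau(h,g,f)$ follows from the fact that the coordinate swap $\sigma\colon(a,b,c)\mapsto(c,b,a)$ preserves $T$ and satisfies $\tau_0\circ\sigma=\tau_0$, so that the entire span is $\sigma$-equivariant. Para-associativity is the only axiom comparing two distinct composites; I would deduce it by running the same span construction for the $5$-ary fat diagonal $T_5\subseteq Y^{\times5}$ (whose inclusion is again at least $(2d+1)$-connected, hence defines a unique $5$-fold operation whenever $n\leq2d$) and checking that both bracketings of $\tau$ restrict to it. The possibility $[X,Y]=\varnothing$ requires no separate argument: the empty set is an abelian heap, and the span simply consists of empty Kan complexes when there are no maps $X\to Y$.
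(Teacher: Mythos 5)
Your route is genuinely different from the paper's. The paper stabilizes: it builds an unpointed suspension--loop adjunction $\Sigma\dashv\Omega$, proves abstract Freudenthal and Whitehead theorems to identify $[X,Y]\cong[\Sigma X,\Sigma Y]^{\partial\bbI}$ in the range $n\le 2d$, and then induces the heap operation from a ``pinch'' (co-Mal'cev) structure on the suspension, with commutativity requiring the double suspension and an Eckmann--Hilton argument. You instead attach a single span $Y^{\times 3}\hookleftarrow T\to Y$ to the target. Where your approach works it is more economical: the Mal'cev identities are tautological, and commutativity is free from the $\sigma$-equivariance of the span, with no Eckmann--Hilton needed. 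Your central estimate is also correct in spaces: by Mather's cube theorem the homotopy fibre of $T\to Y^{\times 3}$ is the homotopy pushout of $\Omega Y\xleftarrow{\;\mathrm{pr}_1\;}\Omega Y\times\Omega Y\xrightarrow{\;\mathrm{pr}_2\;}\Omega Y$, i.e.\ the join $\Omega Y*\Omega Y$ of two $(d-1)$-connected spaces, which is $2d$-connected; hence $\iota$ is $(2d+1)$-connected.

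There are, however, two genuine gaps. First, ``$d$-connected'' and ``$n$-dimensional'' in the theorem are not absolute notions: the paper defines them through a class $\mcD$ of \emph{excisive} cofibrant objects ($Y$ is $d$-connected iff $\map(D,Y)$ is a $d$-connected simplicial set for every $D\in\mcD$; dimension means being built from cells $\partial\Delta^k\otimes D\to\Delta^k\otimes D$). Consequently your key estimate cannot be obtained ``abstractly in $\mcM$ via a Blakers--Massey cube argument'': a general simplicial model category supports no such theorem, and connectivity there is only visible through the functors $\map(D,-)$. The bridge between the homotopy pushout $T$ and these mapping spaces is exactly excisiveness, which is the same mechanism the paper uses to prove its Freudenthal theorem: since the span $Y^{\times 2}\leftarrow Y\to Y^{\times 2}$ consists of fibrant objects, excisiveness gives $\map(D,T^{\mathrm{fib}})\simeq\map(D,Y)^{\times 2}\cup^{h}_{\map(D,Y)}\map(D,Y)^{\times 2}$, and your claim then reduces to the simplicial-set statement above, after which the paper's Whitehead theorem (your ``standard fact'') gives bijectivity on $\pi_0\map(X,-)$ for $n\le 2d$. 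Without this step the proof does not get started in the stated generality.

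Second, the associativity step is underspecified and, on its most natural reading, incorrect. The union of single-coincidence loci $\{x=r\}\cup\{r=y\}\cup\{y=s\}\cup\{s=z\}$ carries no tautological map to $Y$ (on $\{x=r\}$ the word $x-r+y-s+z$ reduces only to the ternary word $y-s+z$, not to a variable), so $T_5$ must be a union of double-coincidence loci; moreover you cannot take all of them, since on $\{r=y,\,x=s\}$ the bracketing $t(x,r,t(y,s,z))$ does \emph{not} restrict tautologically (the inner triple $(y,s,z)=(r,x,z)$ is generic and has no canonical lift to $T$). The workable choice is $T_5=\{x=r,y=s\}\cup\{x=r,s=z\}\cup\{r=y,s=z\}$, and even then ``both bracketings restrict to $T_5$'' requires an argument: each bracketing is induced by a composite span such as $\bigl(T\times Y^{\times 2}\bigr)\times^{h}_{Y^{\times 3}}T$, whose wrong-way leg is again $(2d+1)$-connected because connectivity is stable under homotopy pullback, and one must construct maps from $T_5$ into both composite spans, over $Y^{\times 5}$ and under $Y$, compatibly on all intersections of the loci. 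That is the real content of associativity and it is absent from your sketch; in the paper, by contrast, all the heap axioms follow from short diagram chases with the pinch maps.
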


In the last section, we outline a construction of a category of finite spectra in the spirit of Spanier and Whitehead.

\subsection*{Heaps}

A \emph{Mal'cev operation} on a set $S$ is a ternary operation
\[t\colon S\times S\times S\to S\]
satisfying the following two conditions: $t(x,x,y)=y$, $t(x,y,y)=x$. It is said to be
\begin{itemize}[topsep=2pt,itemsep=2pt,parsep=2pt,leftmargin=\parindent,label=--]
\item
	\emph{asssociative} if $t(x,r,t(y,s,z))=t(t(x,r,y),s,z)$;
\item
	\emph{commutative} if $t(x,r,y)=t(y,r,x)$.
\end{itemize}
A set equipped with an associative Mal'cev operation is called a \emph{heap}. It is said to be an \emph{abelian heap} if in addition, the operation is commutative. We remark that traditionally, heaps are assumed to be non-empty. Since it is easy to produce examples where $[X,Y]=\emptyset$ in Theorem~\ref{t:heap_structure_abstract}, it will be more convenient to drop this convention.

The relation of heaps and groups works as follows. Every group becomes a heap if the Mal'cev operation is defined as $t(x,r,y)=x-r+y$. On the other hand, by fixing an element $0\in S$ of a heap $S$, we may define the addition and the inverse
\[x+y=t(x,0,y),\quad -x=t(0,x,0).\]
It is simple to verify that this makes $S$ into a group with neutral element $0$. In both passages, commutativity of heaps corresponds exactly to the commutativity of groups.

\subsection*{Acknowledgement}
We are grateful to Martin \v{C}adek for carefully reading a draft of this paper.

\section{Suspensions and loop spaces in unpointed model categories}

We work in a simplicial model category $\mcM$ with the enriched hom-set denoted by $\map(X,Y)$, tensor by $K\otimes X$ and cotensor by $Y^K$. Examples that we have in mind are $G$-spaces over a fixed $G$-space $B$, or diagrams of such, see Section~\ref{s:examples}. We denote by $I$ the simplicial set $\xymatrix@1@C=1pc{\xysmallbullet & \xysmallbullet \ar[l] \ar[r] & \xysmallbullet}$ formed by two standard $1$-simplices glued along their initial vertices, and by $\partial I$ its obvious ``boundary'' composed of the two terminal vertices. Further, we denote by $\bbone$ the cofibrant fibrant replacement of the terminal object. The standard references for simplicial sets, model categories and homotopy colimits are \cite{GoerssJardine,Hirschhorn}.

We define a Quillen adjunction $\Sigma\dashv\Omega$ composed of the \emph{suspension} and \emph{loop space} functors%
\footnote{
	One may also use $\mcM/\bbone\times\bbone$ instead of $\mcM/\bbone$ (later replacing equalizer--cokernel pair by pullback--pushout), thus producing a more symmetric adjunction. However, the non-symmetric version is easier to generalize to higher suspensions -- these will be needed later.
}
\[\xymatrix{
\Sigma\col\mcM/\bbone \ar@<2pt>[r] & \bbone\sqcup\bbone/\mcM\loc\Omega \ar@<2pt>[l]
}\]
where $\mcM/\bbone$ is the slice category of objects over $\bbone$ while $\bbone\sqcup\bbone/\mcM$ is the slice category of objects under $\bbone\sqcup\bbone$. The functor $\Sigma$ is defined on $p\colon X\to\bbone$ as a homotopy pushout
\[\xymatrix@C=3pc{
X \ar[r]^-p \ar[d]_-p & \bbone \ar[d]^-j \\
\bbone \ar[r]_-i & \Sigma X \hpo
}\]
with the two components $i\comma j$ of the universal cone making $\Sigma X$ into an object under $\bbone\sqcup\bbone$. In other words, the maps $i$, $j$ form the homotopy cokernel pair of $p$. The loop space functor is defined on $[i,j]\colon\bbone\sqcup\bbone\lra Y$ as the homotopy equalizer of $i$ and $j$. In the presence of a simplicial enrichment, there are standard models for homotopy (co)limits that translate into the following pushout/pullback squares:
\[\xymatrix@C=3pc{
\partial I\otimes X \ar[r]^-{\id\otimes p} \ar[d]_-{\mathrm{incl}} &
	\leftbox{\partial I\otimes\bbone}{{}\cong\bbone\sqcup\bbone} \ar[d]^-{[i,j]} & &
	\Omega Y \ar[r] \ar[d]_-p \pb &
	Y^I \ar[d]^-{\mathrm{res}} \\
I\otimes X \ar[r] &
	\Sigma X \po & &
	\bbone \ar[r]_-{(i,j)} &
	Y^{\partial I}
}\]
From this restatement, it follows rather easily that $\Sigma\dashv\Omega$ is a Quillen adjunction.

Since the unique map $\bbone\to 1$ to the terminal object is a weak equivalence between fibrant objects, it is easy to see that there is a Quillen equivalence $\mcM/\bbone\simeq_Q\mcM$. Thus, one may think of $\Sigma$ as being defined on $\mcM$ while $\Omega$ is defined on objects equipped with a ``pair of basepoints'' (and $\Omega Y$ is then the space of paths from the first basepoint to the second).

For a cofibrant object $Y$, we consider the \emph{derived unit} $\eta_Y\colon Y\to\Omega(\Sigma Y)^\mathrm{fib}$, where the superscript ``$\mathrm{fib}$'' denotes the fibrant replacement of $\Sigma Y$. To state an abstract version of a Freudenthal suspension theorem, we need a notion of a $\thed$-equivalence.

\subsection*{Abstract theorems of Freudenthal and Whitehead}

\newcommand{\excisive}{excisive}

We say that a cofibrant object $D\in\mcM$ is \emph{\excisive} if the right derived functor of $\map(D,-)$ preserves homotopy pushouts in the following sense: when $Y\colon\mcS\to\mcM$ is a diagram consisting of fibrant objects, indexed by the span category $\mcS=\xymatrix@1@C=1pc{\xysmallbullet & \xysmallbullet \ar[l] \ar[r] & \xysmallbullet}$, then the composition
\[\hocolim_\mcS\map(D,Y{-})\lra\map(D,\hocolim_\mcS Y)\lra\map(D,(\hocolim_\mcS Y)^\mathrm{fib})\]
is a weak equivalence.

Let us fix a collection $\mcD\subseteq\mcM^\mathrm{cof}$ of cofibrant \excisive{} objects.
\begin{itemize}[topsep=2pt,itemsep=2pt,parsep=2pt,leftmargin=\parindent,label=$\bullet$]
\item
	We say that $Y$ is \emph{$\thed$-connected} if for each $D\in\mcD$, $\map(D,Y^\mathrm{fib})$ is $\thed$-connected.
\item
	We say that a map $f\colon Y\to Z$ is a \emph{$\thed$-equivalence} if for each $D\in\mcD$, the map $f_*\colon\map(D,Y^\mathrm{fib})\to\map(D,Z^\mathrm{fib})$ is a $\thed$-equivalence of simplicial sets.
\end{itemize}

\begin{theorem}[Freudenthal]\label{t:Freudenthal}
Let $Y$ be a $\thed$-connected cofibrant object. Then the canonical map $\eta_Y\colon Y\to\Omega(\Sigma Y)^\mathrm{fib}$ is a $(2\thed+1)$-equivalence.
\end{theorem}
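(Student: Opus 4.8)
The plan is to reduce the statement to the classical Freudenthal suspension theorem for simplicial sets by applying the functors $\map(D,-)$ for $D\in\mcD$. By the very definition of a $(2\thed+1)$-equivalence, it suffices to fix an \excisive{} object $D\in\mcD$, set $M\defeq\map(D,Y^{\mathrm{fib}})$ --- a $\thed$-connected simplicial set by the definition of $\thed$-connectedness of $Y$ --- and show that the map induced by $\eta_Y$,
\[M\lra\map\bigl(D,(\Omega(\Sigma Y)^{\mathrm{fib}})^{\mathrm{fib}}\bigr),\]
is a $(2\thed+1)$-equivalence. The heart of the argument is that, after applying $\map(D,-)$, the functors $\Sigma$ and $\Omega$ turn into the ordinary (unreduced) suspension and the path-space functor on simplicial sets, so that this map becomes the classical suspension unit $M\to\Omega\Sigma M$.

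First I would compute $\map(D,(\Sigma Y)^{\mathrm{fib}})$. Since $\Sigma Y$ is the homotopy pushout of the span $\bbone\xla{p}Y\xra{p}\bbone$, applying the \excisive ness of $D$ to the fibrant span $\bbone\la Y^{\mathrm{fib}}\ra\bbone$ (whose homotopy pushout is $\Sigma Y$ up to weak equivalence) yields
\[\map\bigl(D,(\Sigma Y)^{\mathrm{fib}}\bigr)\simeq\hocolim_\mcS\map(D,Y{-}),\]
the right-hand side being the homotopy pushout of $\map(D,\bbone)\la M\ra\map(D,\bbone)$. Because $\bbone\to 1$ is a weak equivalence of fibrant objects and $D$ is cofibrant, $\map(D,\bbone)\simeq\map(D,1)=*$ is contractible; hence this homotopy pushout is the unreduced suspension $\Sigma M$, with its two cone points corresponding to the canonical basepoints $i,j\colon\bbone\to\Sigma Y$. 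Next I would compute $\map(D,\Omega(\Sigma Y)^{\mathrm{fib}})$. The loop space $\Omega Z$ of a fibrant object $Z$ is the homotopy pullback of $\bbone\to Z^{\partial I}\la Z^I$; since $D$ is cofibrant, $\map(D,-)$ carries fibrations between fibrant objects to fibrations and commutes with cotensors via the enriched adjunction $\map(D,Z^K)\cong\map(D,Z)^K$, so it preserves this homotopy pullback. Combining this with the previous computation and $\map(D,\bbone)\simeq *$ gives $\map(D,\Omega(\Sigma Y)^{\mathrm{fib}})\simeq\Omega\Sigma M$, the space of paths in $\Sigma M$ between its two cone points.

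Finally, naturality of these identifications in $Y$ shows that the map induced by the derived unit $\eta_Y$ is precisely the classical suspension unit $M\to\Omega\Sigma M$. As $M$ is $\thed$-connected, the classical Freudenthal suspension theorem for simplicial sets asserts exactly that this map is a $(2\thed+1)$-equivalence, which is what we need. I expect the main obstacle to be the bookkeeping in the two computations: verifying that the \excisive{} hypothesis applies to the specific span modelling $\Sigma Y$ (with the correct cofibrant/fibrant replacements so that $\hocolim_\mcS$ is the genuine homotopy pushout), that the contractibility $\map(D,\bbone)\simeq *$ matches the cone points of $\Sigma M$ with the basepoints $i,j$, and that the resulting map agrees with the simplicial suspension unit rather than being merely abstractly weakly equivalent to it. The possibly empty case ($M=\emptyset$, when $\Sigma M$ is two disjoint points and the path space between them is empty) is then covered automatically, since $\emptyset\to\emptyset$ is trivially a $(2\thed+1)$-equivalence.
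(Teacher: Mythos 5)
Your proposal is correct and takes essentially the same approach as the paper: the paper's proof likewise applies $\map(D,-)$, uses excisiveness to commute it past the homotopy pushout defining $\Sigma$ and preservation of homotopy limits to commute it past $\Omega$, and then invokes the classical Freudenthal suspension theorem for the $\thed$-connected simplicial set $\map(D,Y)$. The only difference is presentational: the paper packages your two computations into a single weak equivalence $\Omega(\Sigma\map(D,Y))^{\mathrm{fib}}\to\map(D,\Omega(\Sigma Y)^{\mathrm{fib}})$ sitting in a commutative triangle with the two suspension units, while you unwind it into the explicit identifications with the unreduced suspension and the path space between its cone points.
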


Let $\mcI_\then$ denote the following collection of maps:
\[\mcI_\then=\{\partial\stdsimp\thek\otimes D\to\stdsimp\thek\otimes D\mid\thek\leq\then\comma D\in\mcD\}.\]
We say that $X$ has \emph{dimension} at most $\then$ if the unique map $0\to X$ from the initial object is an $\mcI_\then$-cell complex (i.e.\ it is obtained from $\mcI_\then$ by pushouts and transfinite compositions); we write $\dim X\leq\then$. More generally, if $A\to X$ is an $\mcI_\then$-cell complex, we write $\dim_AX\leq\then$.

\begin{tremark}
It is also possible to add to $\mcI_\then$ all trivial cofibrations -- this will not change the homotopy theoretic nature of $\mcI_\then$-cell complexes.
\end{tremark}

\begin{theorem}[Whitehead]\label{t:Whitehead}
Let $X\comma Y\comma Z$ be objects of $\mcM$ and $f\colon Y\to Z$ a $\thed$-equivalence. If $\dim X\leq\thed$, then the induced map
\[f_*\colon[X,Y]\to[X,Z]\]
is surjective. If $\dim X<\thed$, the induced map is a bijection.
\end{theorem}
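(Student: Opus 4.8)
The plan is to reduce the assertion to a single connectivity estimate for simplicial mapping spaces and then to establish that estimate by induction over the cells of $X$. First I would observe that $X$ is cofibrant, being an $\mcI_\then$-cell complex, and that both $[X,-]$ and the notion of $\thed$-equivalence only see fibrant replacements; hence I may replace $Y$ and $Z$ by fibrant objects, after which $[X,W]=\pi_0\map(X,W)$ and the map in question is $\pi_0$ of $f_*\colon\map(X,Y)\to\map(X,Z)$. The statement I would actually prove is the sharper claim that $\dim X\leq\then$ forces $f_*\colon\map(X,Y)\to\map(X,Z)$ to be a $(\thed-\then)$-equivalence of simplicial sets; the theorem is then the two special cases $\then=\thed$, where a $0$-equivalence is exactly a surjection on $\pi_0$, and $\then=\thed-1$, where a $1$-equivalence is exactly a bijection on $\pi_0$.

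The one input I would isolate at the start is purely simplicial: for a Kan complex $M$ the fibre of the restriction $M^{\stdsimp\thek}\to M^{\partial\stdsimp\thek}$ is, up to homotopy, the $\thek$-fold loop space $\Omega^\thek M$, so that a $\thed$-equivalence $\phi\colon M\to M'$ induces on these fibres the map $\Omega^\thek\phi$, whose homotopy fibre $\Omega^\thek\fibre(\phi)$ is $(\thed-\thek-1)$-connected. Through the enriched adjunction $\map(\stdsimp\thek\otimes D,W)\cong\map(D,W)^{\stdsimp\thek}$ (and its analogue for $\partial\stdsimp\thek$), this is precisely the mechanism that turns the cells of $X$ into connectivity bounds.

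Next I would filter $X$ by its cell structure, $0=X_0\to X_1\to\cdots\to\colim_\alpha X_\alpha=X$, each stage $X_\alpha\to X_{\alpha+1}$ a pushout of cells $\partial\stdsimp{\thek_j}\otimes D_j\to\stdsimp{\thek_j}\otimes D_j$ with all $\thek_j\leq\then$. Applying $\map(-,W)$ converts these colimits into a tower of fibrations with $\map(X,W)=\lim_\alpha\map(X_\alpha,W)$, and the defining pushout exhibits the fibre of $\map(X_{\alpha+1},W)\to\map(X_\alpha,W)$ as $\prod_j\Omega^{\thek_j}\map(D_j,W)$. Comparing $W=Y$ with $W=Z$ along $f$ and taking vertical homotopy fibres, the layer $\fibre(F_{\alpha+1}\to F_\alpha)$ of the induced map of towers $F_\alpha=\fibre\bigl(\map(X_\alpha,Y)\to\map(X_\alpha,Z)\bigr)$ becomes $\prod_j\Omega^{\thek_j}\fibre\bigl(f_*\colon\map(D_j,Y)\to\map(D_j,Z)\bigr)$; since $f$ is a $\thed$-equivalence every factor is $(\thed-\thek_j-1)$-connected, hence $(\thed-\then-1)$-connected because $\thek_j\leq\then$, and a product of such spaces remains $(\thed-\then-1)$-connected.

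Finally I would assemble the tower. Over a compatible system of basepoints one has $F_0=\ast$ (as $\map(0,W)=\ast$), and the layers just computed are all $(\thed-\then-1)$-connected; inducting up the tower keeps each $F_\alpha$ $(\thed-\then-1)$-connected, while at limit stages the same connectivity makes the transition maps surjective on the relevant homotopy groups, so the $\lim^1$-terms vanish and $F=\fibre\bigl(\map(X,Y)\to\map(X,Z)\bigr)=\lim_\alpha F_\alpha$ is again $(\thed-\then-1)$-connected, i.e.\ $f_*$ is a $(\thed-\then)$-equivalence. I expect the genuine work to sit in this last step: propagating a connectivity estimate through a possibly transfinite tower over every basepoint while controlling the $\lim^1$-terms. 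The per-cell estimate, by contrast, is a formal consequence of $f$ being a $\thed$-equivalence, and the whole argument can equally be phrased as ordinary obstruction theory -- a lift, respectively a homotopy between two lifts, extends over each cell because the relevant obstruction lies in a homotopy group of $\fibre(f_*)$ that vanishes in the range $\thek\leq\thed$, respectively $\thek<\thed$.
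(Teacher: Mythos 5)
Your proposal is correct and takes essentially the same route as the paper: a per-cell estimate showing that the square of mapping spaces induced by a generating cell $\partial\stdsimp\thek\otimes D\to\stdsimp\thek\otimes D$ is $(\thed-\thek)$-cartesian (obtained, as in your argument, by passing through the cotensor adjunction and reducing to loop spaces of $\map(D,-)$, where the $\thed$-equivalence hypothesis applies), followed by induction over the cell structure of $X$ to conclude that $f_*\colon\map(X,Y)\to\map(X,Z)$ is a $(\thed-\then)$-equivalence. The paper compresses your explicit tower-of-fibrations and $\lim^1$ assembly into the statement that $(\thed-\then)$-cartesian squares are preserved under cell attachments and transfinite composition, so the two proofs differ only in packaging (your identification of the layers as products $\prod_j\Omega^{\thek_j}\fibre(f_*)$ should strictly read ``empty or such a product,'' with non-emptiness being precisely the obstruction-theoretic point your final paragraph addresses).
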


The usefullness of the above theorems is limited by the existence of a class $\mcD$ of \excisive{} objects for which the resulting notions of connectivity and dimension are interesting. Examples of such classes are provided in Section~\ref{s:examples}. We continue with the proof of Theorem~\ref{t:heap_structure_abstract} assuming Theorems~\ref{t:Freudenthal} and~\ref{t:Whitehead} -- these are proved in Section~\ref{s:proofs}.

\subsection*{Proof of Theorem~\ref{t:heap_structure_abstract}}\label{s:proof_heap_structure_abstract}
It follows from the Quillen adjunction $\Sigma\dashv\Omega$, Theorems~\ref{t:Freudenthal} and~\ref{t:Whitehead} that for $\dim X\leq 2\conn Y$, we have
\[[X,Y]\cong[X,Y]_{\bbone}\cong[X,\Omega(\Sigma Y)^\mathrm{fib}]_{\bbone}\cong[\Sigma X,\Sigma Y]^{\partial\bbI},\]
where we denote $\partial\bbI=\partial I\otimes\bbone\cong\bbone\sqcup\bbone$. It is rather straightforward to equip $\Sigma X\in\partial\bbI/\mcM$ with a ``weak co-Malcev cooperation'' -- this comes from such a structure on $I\in\partial I/\sSet$ given by the zig-zag
\begin{equation}\label{eq:coMalcev}
\xymatrix@!C=0pt@C=1.5pc@R=2pc{
I & & & &
	\widetilde I \ar[llll]_-\sim \ar[rrrrrr] & & & & & &
	I\sqcup_{\partial I}I\sqcup_{\partial I}I \\
\xysmallbullet \POS[]*+<.5pc>!LD{\scriptstyle\mathrm{tar}} & & & &
	\xysmallbullet & &
	\xysmallbullet \POS[]*+<.5pc>!LD{\scriptstyle\mathrm{tar}} & & & &
	\xysmallbullet \POS[]*+<.5pc>!LD{\scriptstyle\mathrm{tar}} \\
\xysmallbullet \ar@{-}[u] \POS[]*+<.5pc>!RU{\scriptstyle\mathrm{src}} & & &
	\xysmallbullet \ar@{-}[ru] \POS[]*+<.5pc>!RU{\scriptstyle\mathrm{src}} & &
	\xysmallbullet \ar@{-}[lu] \ar@{-}[ru] & & & & &
	\xysmallbullet \ar@<-2pt>@/_6pt/@{-}[u] \ar@{-}[u] \ar@<2pt>@/^6pt/@{-}[u] \POS[]*+<.5pc>!RU{\scriptstyle\mathrm{src}}
}\end{equation}
(both maps take the copies of $I$ in $\widetilde I$ onto the corresponding copies of $I$ in the target; for the second map, they are the left, the middle and the right copy). Tensor-multiplying by $X$ and collapsing the source and target copies of $X$ to $\bbone$'s, one gets
\[\Sigma X\xlla\sim\widetilde\Sigma X\lra\Sigma X\sqcup_{\partial\bbI}\Sigma X\sqcup_{\partial\bbI}\Sigma X.\]
On homotopy classes, it induces the map $t$ in the following diagram.
\[\xymatrix@R=1pc{
\crightbox{[\Sigma X,\Sigma Y]^{\partial\bbI}\times[\Sigma X,\Sigma Y]^{\partial\bbI}\times{}}{[\Sigma X,\Sigma Y]^{\partial\bbI}} \ar@/^10pt/[rrd]^-t \POS[]\ar@{}[d]|-\tdcong \\
[\Sigma X\sqcup_{\partial\bbI}\Sigma X\sqcup_{\partial\bbI}\Sigma X,\Sigma Y]^{\partial\bbI} \ar[r] & [\widetilde\Sigma X,\Sigma Y]^{\partial\bbI} & [\Sigma X,\Sigma Y]^{\partial\bbI} \ar[l]_-\cong
}\]

To prove the Mal'cev conditions, consider the homotopy commutative diagram
\[\xymatrix@C=3pc{
\widetilde\Sigma X \ar[r] \ar[d]_-\sim \POS[];[rd]**{}?<>(.5)*{\scriptstyle\mathrm{htpy}} & \Sigma X\sqcup_{\partial\bbI}\Sigma X\sqcup_{\partial\bbI}\Sigma X \ar[d]^-{[\incl_\mathrm{left},\incl_\mathrm{left},\incl_\mathrm{right}]} \\
\Sigma X \ar[r]_-{\incl_\mathrm{right}} & \Sigma X\sqcup_{\partial\bbI}\Sigma X
}\]
with the map on the right restricting to the indicated maps on the three copies of $\Sigma X$ in the domain -- they are the inclusions of $\Sigma X$ as the left or right copy in $\Sigma X\sqcup_{\partial\bbI}\Sigma X$. This easily yields the identity $t(x,x,y)=y$ and a symmetric diagram gives $t(x,y,y)=x$. Thus, $t$ is a Mal'cev operation. The associativity is equally simple to verify.

\subsection*{Higher suspensions and commutativity}

In order to get commutativity, we introduce higher suspensions. Let $\partial I^k$ be the obvious boundary of $I^k=I\times\cdots\times I$ and denote $\partial\bbI^k=\partial I^k\otimes\bbone$ and $\bbI^k=I^k\otimes\bbone$. We assume that $\mcM$ is right proper; otherwise, one has to fibrantly replace $\bbI^k$. The higher suspensions are naturally defined on $\partial\bbI^k/\mcM/\bbI^k$, the category of chains $\partial\bbI^k\xlra{i}X\xlra{p}\bbI^k$, whose composition is assumed to be the canonical inclusion. Then $\Sigma^\ell X$ is the pushout in
\[\xymatrix@C=3pc{
\partial I^\ell\otimes X \ar[r]^-{\id\otimes p} \ar[d]_-{\incl\otimes\id} &
	\partial I^\ell\otimes\bbI^k \ar[d] \ar@/^1pc/[rd]^-{\incl\otimes\id} \\
I^\ell\otimes X \ar[r] \ar@/_1.5pc/[rr]_-{\id\otimes p} &
	\Sigma^\ell X \ar@{-->}[r] \po &
	I^\ell\otimes\bbI^k
}\]
This makes $\Sigma^\ell X$ into an object over $\bbI^{\ell+k}$. The map $i$ then induces
\[\partial\bbI^{\ell+k}=\Sigma^\ell\partial\bbI^k\to\Sigma^\ell X,\]
making $\Sigma^\ell$ into a functor $\Sigma^\ell\colon\partial\bbI^k/\mcM/\bbI^k\to\partial\bbI^{k+\ell}/\mcM/\bbI^{k+\ell}$. As such, $\Sigma^\ell$ is a left Quillen functor. Moreover, it is clear that $\Sigma^{\ell_0}\Sigma^{\ell_1}\cong\Sigma^{\ell_0+\ell_1}$.

The right properness of $\mcM$ implies $\partial\bbI^k/\mcM/\bbI^k\simeq_Q\partial\bbI^k/\mcM$ and we may think of the suspensions as defined on $\partial\bbI^k/\mcM$. By an obvious generalization of Theorem~\ref{t:Freudenthal}, we obtain for $\dim X\leq 2\conn Y$ bijections
\[[X,Y]\cong[\Sigma X,\Sigma Y]^{\partial\bbI}\cong[\Sigma^2X,\Sigma^2Y]^{\partial\bbI^2}.\]

The ``square'' of \eqref{eq:coMalcev} yields the following diagram
\[\xymatrix@R=1.5pc@C=3pc{
\partial I^2 \ar[d] & \partial\widetilde I^2 \ar[l]_-h^-\sim \ar[r]^-h_-\sim \ar[d] & \partial I^2 \ar[d] \\
I^2 & \widetilde I^2 \ar[l]_-\sim \ar@<-.2pc>[r] \ar@<.2pc>[r] & \leftbox{I^2\sqcup_{\partial I^2}I^2}{{}\sqcup_{\partial I^2}I^2}
}\]
with the two parallel arrows denoting two possible ways of folding a square into three squares -- horizontally and vertically. Thus, the diagram takes place in $\partial\widetilde I^2/\sSet$. Denoting $\partial\widetilde{\bbI}^2=\partial\widetilde I^2\otimes\bbI^0$, we obtain two heap structures on $[\Sigma^2X,\Sigma^2Y]^{\partial\widetilde{\bbI}^2}$ that distribute over each other. Since the Eckman--Hilton argument holds for heaps, these structures are identical and commutative. Because $h$ is a weak equivalence, the canonical map
\[[\Sigma^2X,\Sigma^2Y]^{\partial\bbI^2}\xlra\cong[\Sigma^2X,\Sigma^2Y]^{\partial\widetilde\bbI^2}\]
is a bijection and it may be used to transport the abelian heap structure to $[\Sigma^2X,\Sigma^2Y]^{\partial\bbI^2}$.
\qed\vskip\topsep

\begin{remark}
The isomorphism in the beginning of the above proof takes the following more symmetric form for $A/\mcM$:
\[[X,Y]^A\cong[X,\Omega(\Sigma Y)^\mathrm{fib}]^A\cong[\Sigma X,\Sigma Y]^{\Sigma A}\]
(the suspension in $A/\mcM$ is weakly equivalent to that computed in $\mcM$ and since the initial object of $A/\mcM$ is $A$, we have $\partial\bbI=\Sigma 0=\Sigma A$).
\end{remark}

\section{Proofs of Freudenthal and Whitehead Theorems}\label{s:proofs}

\begin{proof}[Proof of Theorem~\ref{t:Freudenthal}]
The following diagram commutes
\[\xymatrix@R=.5pc@C=3pc{
& \Omega(\Sigma\map(D,Y))^\mathrm{fib} \ar[dd]^-\sim \\
\map(D,Y) \ar@/^10pt/[ru]^-{\eta_{\map(D,Y)}} \ar@/_10pt/[rd]_-{\eta_{Y*}} \\
& \map(D,\Omega(\Sigma Y)^\mathrm{fib})
}\]
and the vertical map is a weak equivalence since $\map(D,{-})$ commutes with homotopy limits such as $\Omega$ in general and it commutes with the homotopy pushout $\Sigma$ by our assumption of $D$ being \excisive. The map $\eta_{\map(D,Y)}$ is a $(2\thed+1)$-equivalence since the Freudenthal suspension theorem holds in simplicial sets and $\map(D,Y)$ is $\thed$-connected.
\end{proof}

\begin{proof}[Proof of Theorem~\ref{t:Whitehead}]

Denoting $\iota\colon\partial\stdsimp\thek\otimes D\to\stdsimp\thek\otimes D$, we will first show that the square
\begin{equation}\label{eq:d-equivalence_square}
\xymatrixc{
\map(\stdsimp\thek\otimes D,Y) \ar[r]^-{f_*} \ar@{->>}[d]_-{\iota^*} & \map(\stdsimp\thek\otimes D,Z) \ar@{->>}[d]^-{\iota^*} \\
\map(\partial\stdsimp\thek\otimes D,Y) \ar[r]_-{f_*} & \map(\partial\stdsimp\thek\otimes D,Z)
}\end{equation}
is $(\thed-\thek)$-cartesian, i.e.\ that the map from the top left corner to the homotopy pullback is a $(\thed-\thek)$-equivalence. Equivalently, the induced map of the homotopy fibres of the two vertical maps is a $(\thed-\thek)$-equivalence for all possible choices of basepoints.

The square \eqref{eq:d-equivalence_square} is isomorphic to
\[\xymatrix@C=3pc{
\map(D,Y)^{\stdsimp\thek} \ar[r]^-{f_*} \ar@{->>}[d]_-{\iota^*} & \map(D,Z)^{\stdsimp\thek} \ar@{->>}[d]^-{\iota^*} \\
\map(D,Y)^{\partial\stdsimp\thek} \ar[r]_-{f_*} & \map(D,Z)^{\partial\stdsimp\thek}
}\]
This square maps to the square on the left of the following diagram via evaluation at any vertex of $\stdsimp\thek$ in such a way that the corresponding homotopy fibres over $\varphi\colon D\to Y$ and $\psi=f\varphi\colon D\to Z$ are organized in the right square:
\[\xymatrix{
\map(D,Y) \ar[r]^-{f_*} \ar[d]_-{\id} & \map(D,Z) \ar[d]^-{\id} & \mathrm{contractible} \ar[r] \ar[d] & \mathrm{contractible} \ar[d] \\
\map(D,Y) \ar[r]_-{f_*} & \map(D,Z) & \Omega^{\thek-1}_{\varphi}\map(D,Y) \ar[r]_-{f_*} & \Omega^{\thek-1}_{\psi}\map(D,Z)
}\]
(the loop spaces are the usual loop spaces based at the indicated points). The square on the left is $\infty$-cartesian and in the one on the right, the map of the homotopy fibres of the vertical maps is $f_*\colon\Omega^\thek_\varphi\map(D,Y)\to\Omega^\thek_\psi\map(D,Z)$ which is indeed a $(\thed-\thek)$-equivalence.

It follows easily from the properties of $(\thed-\then)$-cartesian squares that for all $\mcI_\then$-cell complexes $\iota\colon A\to X$, the square
\[\xymatrix@C=3pc{
\map(X,Y) \ar[r]^-{f_*} \ar@{->>}[d]_-{\iota^*} & \map(X,Z) \ar@{->>}[d]^-{\iota^*} \\
\map(A,Y) \ar[r]_-{f_*} & \map(A,Z)
}\]
is also $(\thed-\then)$-cartesian. In particular, when $\then\leq\thed$ and $A=0$, we obtain a surjection on the components of the spaces at the top, i.e.\ $f_*\colon[X,Y]\to[X,Z]$ is surjective. For $\then<\thed$, it is a bijection (and the induced map on $\pi_1$ is still surjective).
\end{proof}

\section{Examples}\label{s:examples}

We will now show how to produce examples of collections of \excisive{} objects.

\begin{enumerate}[topsep=2pt,itemsep=2pt,parsep=2pt,leftmargin=\parindent,label=\alph*)]
\item\textbf{Spaces.}
In the category of simplicial sets, $\mcD=\{\stdsimp 0\}$ is a collection of \excisive{} objects. The resulting notions of $d$-equivalences and $n$-dimensional objects are the standard ones.
\end{enumerate}

\noindent In the following examples, we assume that $\mcD\subseteq\mcM$ is a collection of \excisive{} objects.

\begin{enumerate}[resume,topsep=2pt,itemsep=2pt,parsep=2pt,leftmargin=\parindent,label=\alph*)]
\item\textbf{Diagram categories.}
Let $\mcC$ be a small (simplicial) category. When $\mcM$ is cofibrantly generated, then the diagram category $\mcM^\mcC$, i.e.\ the category of (simplicial) functors $\mcC\to\mcM$, admits a projective model structure. The collection
\[\mcD'\defeq\{\mcC(c,-)\otimes D\mid c\in\mcC\comma D\in\mcD\}\]
also consists of \excisive{} objects -- this follows from the Yoneda lemma
\[\map(\mcC(c,-)\otimes D,Y)\cong\map(D,Yc)\]
and the fact that homotopy colimits in $\mcM^\mcC$ are computed pointwise.

In this way, a map $p\colon Y\to Z$ in $\mcM^\mcC$ is a $d$-equivalence if and only if each component $p_c\colon Yc\to Zc$ is a $d$-equivalence.

\item\textbf{Equivariant categories.}
Let $G$ be a group and consider the category $G{-}\mcM$ of objects equipped with a $G$-action. In the paper \cite{Stephan}, sufficient conditions on $\mcM$ are stated that provide a model structure on $G{-}\mcM$ and a Quillen equivalence $\mcM^{\mcO_G^{\phantom{G}\op}}\simeq_QG{-}\mcM$. In particular, these conditions apply to $\mcM=\sSet$. Thus, the equivariant case reduces to that of the diagram categories. The resulting collection of \excisive{} objects is
\[\mcD'\defeq\{G/H\otimes D\mid H\leq G\comma D\in\mcD\}.\]

In this way, a map $p\colon Y\to Z$ in $G{-}\mcM$ is a $d$-equivalence if and only if all fixed point maps $p^H\colon Y^H\to Z^H$ are $d$-equivalences. In $G{-}\sSet$, dimension has the usual meaning.

\item\textbf{Fibrewise categories.}
Let $B\in\mcM$ be an object. Then in the category $\mcM/B$, the collection
\[\mcD'\defeq\{f\colon D\to B\mid D\in\mcD\comma\textrm{$f$ arbitrary}\}\]
also consists of \excisive{} objects. This follows from the fact that for a fibrant object over $B$, i.e.\ a fibration $\varphi\colon Y\fib B$, there is a (homotopy) pullback square
\[\xymatrix@C=3pc{
\map_B(D,Y) \ar[r] \ar[d] \pb & \map(D,Y) \ar@{->>}[d]^-{\varphi_*} \\
{}* \ar[r]_-{f} & \map(D,B)
}\]
and in $\sSet$, homotopy pushouts are stable under homotopy pullbacks by Mather's Cube Theorem, see \cite{Mather}.

In this way, a map $p\colon Y\to Z$ in $\mcM/B$ is a $d$-equivalence if and only if it is a $d$-equi\-val\-ence in $\mcM$. Also, $\dim X\leq n$ in $\mcM/B$ if and only if the same is true in $\mcM$.

\item\textbf{Relative categories.}
Let $A\in\mcM$ be an object. Then in the category $A/\mcM$, the collection
\[\mcD'\defeq\{\incl\colon A\to A\sqcup D\mid D\in\mcD\}\]
of coproduct injections also consists of \excisive{} objects. This follows from the fact that $\map^A(A\sqcup D,Y)\cong\map(D,Y)$ and homotopy pushouts in $A/\mcM$ are computed essentially as in $\mcM$ (more precisely, the homotopy pushout of $X_1\la X_0\ra X_2$ in $A/\mcM$ is obtained from that in $\mcM$ by collapsing the copy of $I\otimes A$ to $A$; the identification map is a weak equivalence).

In this way, a map $p\colon Y\to Z$ in $A/\mcM$ is a $d$-equivalence if and only if it is a $d$-equivalence in $\mcM$. The dimension of an object $X\in A/\mcM$ equals $\dim_AX$.
\end{enumerate}

\section{The Spanier--Whitehead category of spectra}

With the unpointed suspension and loop space as a tool, we will outline a construction of a category of spectra. For simplicity and since we do not have any particular applications in mind, we will only deal with finite spectra in the spirit of Spanier and Whitehead.

As before, we assume that $\mcM$ is right proper. We say that $X\in\mcM$ is a finite complex if it is a finite $\left(\bigcup_{n\geq 0}\mcI_n\right)$-cell complex.

The objects of $\mathsf{Sp}_\mcM$ are formal (de)suspensions $\Sigma^\ell X$ -- these are simply pairs $(\ell,X)$ such that
\begin{itemize}[topsep=2pt,itemsep=2pt,parsep=2pt,leftmargin=\parindent,label=$\bullet$]
\item
	$\ell\in\bbZ$ is an arbitrary integer,
\item
	for some $k\geq-\ell$, $X\in\partial\bbI^k/\mcM^\mathrm{fin}$ is an arbitrary finite complex.
\end{itemize}
We say that $X$ is of \emph{degree} $d=\ell+k\geq 0$. If $\Sigma^{\ell_0}X_0$, $\Sigma^{\ell_1}X_1$ are two objects of the same degree $d$, we define the set $[\Sigma^{\ell_0}X_0,\Sigma^{\ell_1}X_1]$ as the colimit
\[\operatorname*{colim}_{i\geq\max\{-\ell_0,-\ell_1\}}[\Sigma^{\ell_0+i}X_0,\Sigma^{\ell_1+i}X_1]^{\partial\bbI^{d+i}}.\]

There are obvious functors $J_k\colon\mathsf{Ho}(\partial\bbI^k/\mcM^\mathrm{fin})\to\mathsf{Sp}_\mcM$ given by $X\mapsto\Sigma^0X$. We have the following diagram that commutes up to a natural isomorphism
\[\xymatrix@C=3pc{
\mathsf{Ho}(\partial\bbI^k/\mcM^\mathrm{fin}) \ar[r]^-{J_k} \ar[d]_-{\mathbb L\Sigma} & \mathsf{Sp}_\mcM \ar[d]^-\Sigma \\
\mathsf{Ho}(\partial\bbI^{k+1}/\mcM^\mathrm{fin}) \ar[r]_-{J_{k+1}} & \mathsf{Sp}_\mcM
}\]
where $\mathbb L\Sigma$ denotes the total left derived functor of $\Sigma$ and where the suspension functor on the right is $\Sigma^\ell X\mapsto\Sigma^{\ell+1}X$; it is clearly an equivalence onto its image. Thus, the suspension functor in $\mcM^\mathrm{fin}$ is turned into an equivalence in $\mathsf{Sp}_\mcM$.

\vskip 20pt
\vfill
\vbox{\footnotesize%
\noindent\begin{minipage}[t]{0.5\textwidth}
{\scshape
Luk\'a\v{s} Vok\v{r}\'inek}
\vskip 2pt
Department of Mathematics and Statistics,\\
Masaryk University,\\
Kotl\'a\v{r}sk\'a~2, 611~37~Brno,\\
Czech Republic
\vskip 2pt
\url{koren@math.muni.cz}
\end{minipage}
}

\end{document}